\newtheorem{teorema}{Theorem}
\newtheorem{propo}{Proposition}
\newtheorem*{teorema1}{Theorem 1}
\theoremstyle{definition}
\theoremstyle{remark}
\newtheorem{ejemplo}{Example}
\newtheoremstyle{teoremacita}
{3pt}
{3pt}
{\itshape}
{}
{\bfseries}
{}
{ }
{\thmname{#1}\thmnumber{ #2'}\thmnote{ #3}.}
\theoremstyle{teoremacita} \newtheorem*{teor*}{}
\newcommand{\be}{\begin{enumerate}}
\newcommand{\ee}{\end{enumerate}}
\newcommand{\bi}{\begin{itemize}}
\newcommand{\ei}{\end{itemize}}
\newcommand{\dis}{\displaystyle}
\newcommand{\norm}[1]{\left | #1 \right |} 
\newcommand{\norma}[1]{\left |\left| #1 \right|\right |} 
\newcommand{\lista}[2]{{#1}_1,{#1}_2,\ldots ,{#1}_{#2}} 
\newcommand{\pa}[1]{\frac{\partial}{\partial #1}}
\newcommand{\der}[2]{\frac{\partial #1}{\partial #2}}
\newcommand{\RR}{{\mathbb R}}
\newcommand{\ZZ}{{\mathbb Z}}
\newcommand{\NN}{{\mathbb N}}
\newcommand{\QQ}{{\mathbb Q}}
\newcommand{\CC}{{\mathbb C}}
\newcommand{\XX}{{\mathcal X}}
\newcommand{\YY}{{\mathcal Y}}
\newcommand{\BB}{{\mathcal B}}
\newcommand{\OO}{{\mathcal O}}
\newcommand{\FF}{{\mathcal F}}
\newcommand{\bo}{{\mathbf 0}}
\newcommand{\cen}[1]{(\CC^{#1},\bo)}
\DeclareMathOperator{\diff}{Diff}
\begin{document}
\title{Generalized Poincaré-Dulac Singularities of Holomorphic Foliations}

\author{Percy Fern\'{a}ndez-S\'{a}nchez}
\address[Percy Fern\'{a}ndez Sánchez]{Dpto. Ciencias - Secci\'{o}n Matem\'{a}ticas, Pontificia Universidad Cat\'{o}lica del Per\'{u}, Av. Universitaria 1801,
San Miguel, Lima 32, Peru}
\email{pefernan@pucp.edu.pe}

\author{Jorge Mozo-Fern\'{a}ndez}
\address[Jorge Mozo Fern\'{a}ndez]{Dpto. \'{A}lgebra, An\'{a}lisis Matem\'{a}tico, Geometr\'{\i}a y Topolog\'{\i}a \\
Facultad de Ciencias, Universidad de Valladolid \\
Campus Miguel Delibes\\
Paseo de Bel\'{e}n, 7\\
47011 Valladolid - Spain}
\email{jorge.mozo@uva.es}





\date{\today}

\begin{abstract}
In this paper, we study the analytic classification of a class of nilpotent singularities of holomorphic foliations in $(\CC^2,0)$, those exhibiting a Poincaré-Dulac type singularity in their reduction process. This analytic classification is based in the holonomy of a certain component of the exceptional divisor. Finally, as a consequence, we show that these singularities exhibit a formal analytic rigidity.

\end{abstract}

\maketitle

\subsection*{Keywords} Holomorphic foliations, singularities, Poincaré-Dulac, holonomy.

\subsection*{Mathematics Subject Classification} Primary 32S65, Secondary 37C15.

\section{Introduction}

This paper is a contribution to the study of the analytic classification of germs of holomorphic foliations of codimension one defined over an ambient space of dimension two. Such a foliation may be defined, either by a holomorphic 1-form
\begin{equation} \label{forma}
\omega = a(x,y)dx + b(x,y) dy,
\end{equation}
or by its \textit{dual} vector field
\begin{equation} \label{campo}
\mathcal{X}= b(x,y) \pa{x} - a(x,y) \pa{y},
\end{equation}
where $a(x,y)$, $b(x,y)\in \CC \left\{ x,y \right\}$. The origin is a singular point if $a(0,0)= b(0,0)=0$, fact that will be assumed in the sequel.

A problem which has attracted the attention of many mathematicians from long time ago is the classification of such objects, under the action of a convenient group of transformations. Two vector fields
$$
\XX_i= b_i (x,y) \pa{x} - a_i (x,y) \pa{y},  \ i =1,2
$$
as in \eqref{campo} are analytically equivalent if there exists a germ of biholomorphism $\Phi : \cen{2} \longrightarrow \cen{2}$ carrying orbits into orbits, in the sense that 
$$
\Phi_{\ast} \XX_1 = \XX_2.
$$
The previous expression can be written formally in terms of the power series expansions of $\Phi$, $a_i(x,y)$, and $b_i(x,y)$. Then, formal equivalence can also be defined when $\Phi \in \CC [[x,y]]^{2}$ defines an invertible transformation.

Alternatively, two foliations defined by 1-forms $\omega_1$, $\omega_2$ as in \eqref{forma} are analytically equivalent if $\Phi^{\ast} \omega_2 = \omega_1$, with $\Phi$ as above. Formal equivalence in this sense may be accordingly defined.

Both notions of equivalence are substantially different. When discussing the equivalence of vector fields, we take into account the (complex) time parameterizing the solutions, whereas when considering 1-forms, we only take into account the leaves, disregarding the time. Therefore, two analytically equivalent vector fields define analytically equivalent foliations, but the converse is not true: if $U(x,y)$ is a unit, both $\omega$ and $U(x,y) \omega$ define the same foliation, but the vector fields $\XX$ and $U(x,y)\XX$ are not necessarily analytically equivalent in the sense of vector field equivalence. Some authors distinguish these two notions by referring to vector fields as either equivalent or orbitally equivalent \cite{Strozyna,SZ,SZ2}.

In this work, we are interested in the analytic equivalence of foliations, i.e., what is referred to as orbital analytic equivalence in the aforementioned papers. Consequently, throughout the paper, we will primarily work with holomorphic 1-forms instead of vector fields.

The framework of this paper is the search for criteria to determine when two formally equivalent germs of foliations are also analytically equivalent, and to study the formal-analytic moduli of these objects. It is convenient to recall here that any such germ admits a reduction of singularities: consider the map $\pi : (E,D) \longrightarrow \mathbb{C}^2$ which consists of blowing up the origin, with $D = \pi^{-1}(0)$ being a projective line representing the set of all (complex) directions through the origin. If $\omega$ defines a holomorphic foliation over $\mathbb{C}^2$, then $\omega_1 := \dfrac{1}{x^{\nu}} E^{\ast} \omega$ represents the blow-up foliation, where $x=0$ is a local equation of $D$ and $\nu\in \NN$ is the greatest power of $x$ appearing in the expression of $E^{\ast} \omega$. In fact, $\nu$ coincides with the order $\nu(\omega)$ of $\omega$ in the non-dicritical case (when $D$ is an invariant curve of the foliation) and with $\nu(\omega) + 1$ in the dicritical case (non-invariant $D$). After a finite number of point blow-ups, a germ of foliation is transformed in another foliation, defined over a different ambient space, with singularities of simple (or reduced) type, i.e. generated locally by a germ of vector field $\XX$ with non zero linear part, such that the eigenvalues $\lambda_1$, $\lambda_2$ of this linear part verify $\lambda_1\neq 0$ and $\lambda_1/\lambda_2\notin \QQ_{>0}$. A partial proof of this fact may be read in \cite{Seidenberg,vdE}, and a complete one in \cite{MM,CS2}. Two main cases are under consideration:
\begin{enumerate}
\item $\lambda_2\neq 0$. Hyperbolic case.
\item $\lambda_2=0$. Saddle-node case.
\end{enumerate}

In the first case, if $\lambda_2/\lambda_1 \notin \RR$, the foliation is said to be in the Poincaré domain \cite{poincare}, and it is analytically linearizable. The same happens if $\lambda_2/\lambda_1 \in \RR \setminus \QQ$ is not well approximated by rationals, and no small divisors appear. In the presence of resonances ($\lambda_2/\lambda_1 \in \QQ_{<0}$), the situation becomes more complicated: it appears that two formally equivalent foliations are not necessarily analytically equivalent. The analytic classification involves the construction of a large moduli space \cite{MR2}. It is interesting to remark here that the formal normal forms of foliations are not only analytic but also polynomial. However, this does not mean that the normalizing transformations are analytic, nor that formal and analytic classifications coincide.

In the saddle-node case, similar things occur. Such a foliation is formally determined by two numbers (the Milnor number of the singularity and a residue, which is Camacho-Sad index of the weak separatrix, see below), and the analytical classification is again much more complicated, involving a certain infinite-dimensional space. Again, formal normal form is polynomial.

There are several techniques that have proven useful in the study of the analytic classification and the analytic transformations of these objects. One of them is widely used by E. Str\'{o}\.{z}yna and H. \.{Z}o{\l}{\c{a}}dek in \cite{SZ,SZ2}: it consists in a clever use of transformations involving vector fields, complementary subspaces and homological equations in order to guarantee convergence. Another one, of a more geometrical nature, is the holonomy, introduced by J.-F. Mattei and R. Moussu \cite{MM}. Let us recall here that any reduced singularity has exactly two (formal or analytic) smooth invariant  varieties through the singularity, transversal to each other, whose tangent spaces follow the eigendirections of the linear part. These varieties, called separatrices are analytic when they are tangent to the eigendirection determined by a nonzero eigenvalue, so in the saddle-node case a formal, divergent separatrix may appear. Associated to a leaf of the foliation, a holonomy group can be constructed: take a non-singular point $P$ on the leaf and a small germ of transversal $\Sigma$ to the foliation through $P$. Any loop $\gamma$ on the leaf based at $P$ can be lifted thanks to the existence of a transversal fibration locally around $\gamma$, to a new path $\tilde{\gamma}$ with initial point on $\Sigma$ (call it $Q$) and end point $h(Q)$. Identifying $\Sigma\cong (\CC,0)$ a local diffeomorphism $h:(\CC,0)\rightarrow (\CC,0)$ is defined. The set of these diffeomorphisms is a group called the holonomy group of the leaf.

This construction can be made locally around a singular point, taking a separatrix as the leaf. In the resonant case, it is well known that if two formally equivalent singularities have analytically equivalent holonomy maps computed over one of these separatrices, the germs of foliations themselves are analytically equivalent (\cite{MM,MR2,Loray}). It is also the case with the separatrix associated to the non-zero eigenvalue (strong separatrix) of a saddle-node. There is an important difference concerning the proofs, anyhow: while in the resonant case the conjugation between both foliations can be constructed extending the holonomy, due to the presence of a transversal fibration to the foliation around the singular point, this technique cannot be carried out in the saddle-node case, due to the lack of an appropriate transversal fibration.

Concerning the analytic classification of non-reduced germs of foliations, in this paper we will  focus in the nilpotent case. These are foliations  generated locally  by a vector field with a non-zero nilpotent linear part. It is well-known that such a foliation, in appropriate analytic coordinates, can be reduced to the Takens normal form \cite{Takens,SZ,Loray1}
$$
d(y^2 + x^n) + x^{p} U(x) dy,
$$
where $U(0)\neq 0$, $n$, $p\in \NN$, $n\geq 3$, $p\geq 2$. Several different cases need to be considered:
\begin{enumerate}
\item $2p>n$. Generalized cusp.
\item $2p=n$. Generalized saddle.
\item $2p<n$. Generalized saddle-node.
\end{enumerate}

We borrow these names from the works of E. Str\'{o}\.{z}yna and H. \.{Z}o{\l}{\c{a}}dek \cite{Strozyna,SZ,SZ2}.
The generalized cusp case was first studied by R. Moussu \cite{Moussu} when $n=3$, and by D. Cerveau and R. Moussu \cite{CM} in the general case. R. Moussu introduces the notion of \textit{vanishing} holonomy (called projective holonomy in subsequent works), which is the holonomy of a certain component of the exceptional divisor that appears after reduction of singularities as the main analytic invariant.

Later on, R. Meziani \cite{Meziani} considers the case $2p=n$. He studies two subcases. When $U(0)\neq \pm 4$ and $U(0)$ verifies certain arithmetical condition, the situation is quite close to Case 1. If $U(0)= \pm 4$, after reduction of singularities there is only one singular point which is not a corner, of saddle-node type with strong separatrix transversal to the exceptional divisor. The holonomy of this separatrix is an invariant for analytic classification. From this fact, rigidity phenomena (formal-analytic) may be studied.

When $2p<n$, we are in the generalized saddle-node case, studied in \cite{BMS} and in \cite{Strozyna}. It remains to be studied the case when $2p=n$, $U(0)\neq \pm 4$ and the arithmetical condition mentioned above fails. Under these conditions, after $p$ blow-ups, two singularities appear in the last component of the exceptional divisor (besides the corners): a resonant one and a second one that may fall in one of the following two categories:
\begin{enumerate}
\item Either it is dicritical.
\item Or it is of Poincar\'{e}-Dulac type.
\end{enumerate}

A non-dicritical singularity is called of Poincaré-Dulac type when defined locally by a vector field $\XX$ having a linear part with two eigenvalues  $\lambda_1$, $\lambda_2$ such that $\lambda_2/\lambda_1=m\in \NN_{\geq 2}$. This is a standard name, used widely in the literature regarding holomorphic foliations of codimension one, as in \cite[p. 52]{Scardua}, \cite[p. 49]{CS2} or \cite{LNS}. These are singularities belonging to the Poincaré domain, which were studied by H. Dulac \cite{Dulac}, and its name honors these two outstanding mathematicians. Let us remark that the term \textit{Poincaré-Dulac} is also used by other authors in order to describe a reduction procedure in some cases, see for instance \cite{IY,Zoladek}.

Case 1 (dicritical) was studied by R. Meziani and P. Sad \cite{MS} from the point of view of characterizing when these foliations have a meromorphic first integral (results later generalized in dimension three by P. Fernández, J. Mozo and H. Neciosup \cite{FMN3}). In this paper we are focusing in the Case 2 above, which will be called, in coherence with the names given by the aforementioned authors,  \textit{generalized Poincar\'{e}-Dulac}. The objective will be twofold:\begin{enumerate}
\item To study the analytic classification for these generalized Poincar\'{e}-Dulac singularities of holomorphic foliations,  from a geometrical point of view, using projective holonomy.

\item To study the projective holonomy groups, formal-analytic rigidity and realizability of these groups.
\end{enumerate}

Note that after  $p$ blow-ups, the foliation is not desingularized. In fact, it appears a singularity of Poincar\'{e}-Dulac type, with a normal form $xdy-(my+x^m)dx$, for some $m\in \NN$, $m\geq 2$. This integer $m$ is the only analytic invariant. It is necessary to blow-up $m$ additional points for this singularity to be reduced; at the end, a saddle-node appears in a corner, and there are no more separatrices. Nevertheless, in our research we will not need to perform these last $m$ blow-ups, and the analytic classification will be obtained from the holonomy of the $p$th component of the exceptional divisor. This fact (no need to desingularize completely the foliation) is new for this class of nilpotent singularities.

Generalized saddles have also been studied by E. Str\'{o}\.{z}yna and H. \.{Z}o{\l}{\c{a}}dek in \cite{SZ2} from a completely different point of view. We will relate our results to theirs when studying the rigidity of the projective holonomy group.

Another point to be highlighted is that generalized Poincar\'{e}-Dulac singularities are not second type foliations in the sense of \cite{Mattei-Salem}. This situation also occurs in Meziani's case $U(0)=\pm 4$, where a saddle-node in ``bad'' position (weak separatrix contained in the divisor) appears. Nevertheless, we will not explore this fact.

The structure of the paper is as follows. In Section \ref{singularidades-nilpotentes}, a review of nilpotent singularities in dimension two is given, and generalized Poincar\'{e}-Dulac singularities are introduced. The main part of Section \ref{clasificacion-analitica} is devoted to the proof of the main result of the paper, which is the following one:

\begin{teorema1}
Let $\FF_1$, $\FF_2$ be two germs of generalized Poincar\'{e}-Dulac holomorphic  foliations, formally equivalent, with $n=2p$. Assume that $H_i$ is the holonomy group of the $p$th component of the exceptional divisor obtained during the process of reduction of singularities for $\FF_i$ ($i=1,2$). If $H_1$, $H_2$ are analytically conjugated, the foliations are also analytically conjugated.
\end{teorema1}

Finally, in Section \ref{rigidez} we study the realizability of the groups appearing as projective holonomy, and also, the formal-analytic rigidity of generalized Poincar\'{e}-Dulac foliations. We will make convenient connections with related results stated in \cite{SZ2}.

\section{Nilpotent singularities in dimension two. Generalized Poincar\'{e}-Dulac singularities.} \label{singularidades-nilpotentes}

Let us consider a germ of a singularity of a nilpotent holomorphic foliation $\FF$ of codimension one, over an ambient space of dimension two, defined by a 1-form $\omega$ on $(\CC^2,\bo)$. According to F. Takens \cite{Takens}, a formal \textit{prenormal} form for $\omega$ is
\begin{equation} \label{formaprenormal}
\eta =d (y^2+x^n)+ \alpha x^p U(x)dy,
\end{equation}
where $p$, $n\in \NN$, $n\geq 3$, $p\geq 2,$ $\alpha\in \CC^{\ast}$, $U(x)\in \CC [[x]]$, $U(0)=1$.  E. Str\'{o}\.{z}yna and H. \.{Z}o{\l}{\c{a}}dek \cite{SZ}, using direct computations, and independently F. Loray \cite{Loray} with geometrical arguments  have shown that $\eta$ can be obtained from $\omega$ after an analytic change of coordinates, so we may assume that $U(x)\in \CC \{ x\}$. This expression of the 1-form generating the foliation will be our starting point.

As we have stated in the Introduction, we are interested in the generalized saddle case, so here, and in the rest of the paper, we shall assume that $n=2p\geq 4$. Let us briefly describe, without many details, the reduction of singularities of $\FF$. After a chain of $p$ point  blow-ups, $p$ components $\lista{D}{p}$ of the exceptional divisor appear in this order. The singular points are the corners (intersection of two components of the exceptional divisor), and one or two singular points situated in $D_p$. If we take local coordinates such that this chain of blow-ups is obtaining replacing $y$ by $x^pz$ and eliminating powers of $x$ from the expression, the singularities correspond with the roots of the polynomial $2(z^2+1) +\alpha z=0$. Indeed, the transformed foliation $\FF_p$ is generated locally in these coordinates by
$$
\tilde{\omega}= p (2(z^2+1) + \alpha z U(x)) dx + x(2z + \alpha U(x)) dz,
$$
$x=0$ being the equation of the component of the divisor $D_p$. Several possibilities arise:
\begin{enumerate}
\item $\alpha=\pm 4$. In this case, only a singular point appears, which turns out to be a saddle-node, with strong variety transversal to $D_p$ and weak variety contained in the divisor. The foliation is reduced, and this case has already been studied by R. Meziani \cite{Meziani-tesis,Meziani}.
\item $\alpha \neq \pm 4$. Two singular points appear, corresponding to the different roots $z_1$, $z_2$ of the polynomial $2(z^2+1)+\alpha z =0$. If $\dfrac{\sqrt{\alpha^2-16}}{\alpha}\notin \QQ \cap (-1,1)$, both singularities are simple. This case has also been studied by R. Meziani.
\item If $\dfrac{\sqrt{\alpha^2-16}}{\alpha}\in \QQ \cap (-1,1)$, one singularity (say, $z_2$) is simple, resonant, and the quotient of the eigenvalues of the other singularity ($z_1$) is positive rational. Under these conditions, this singular point may be either dicritical (this case was studied by R. Meziani and P. Sad in \cite{MS}) or a singularity of Poincaré-Dulac type.
\end{enumerate}

In order to simplify computations, we begin centering the coordinates at  $z_1$, after a translation $z\mapsto z+z_1$. The foliation is locally generated by
\begin{equation} \label{PDafterbu}
p(2z (z+z_1-z_2) + \alpha (z+z_1) \tilde{U}(x)) dx+ x(2z-2z_2+\alpha \tilde{U}(x))dz, 
\end{equation}
where we have written $U(x)=1+\tilde{U}(x)$, $\tilde{U}(0)= 0$. The matrix of the linear part of the dual vector field is
$$
\begin{pmatrix}
2z_2 & 0 \\
\ast & 2p (z_1-z_2)
\end{pmatrix}.
$$
We need to impose that this singularity is of Poincar\'{e}-Dulac type, its only separatrix being the exceptional divisor. If this occurs, there exists $m\in \NN$, $m\geq 2$, such that
$$
p\cdot \frac{z_1-z_2}{z_2}=m.
$$
This equality imposes strong conditions on $\alpha$. Namely, we must have
\begin{equation} \label{condicion_gPD}
\alpha= -\frac{2 (m+2p)}{\sqrt{p (m+p)}}.
\end{equation}
Given such a value for $\alpha$, it is not straightforward to distinguish whether  we are facing a dicritical singularity or a  Poincar\'{e}-Dulac one. By Poincaré-Dulac Theorem such a singularity must be  analytically equivalent to 
\begin{equation} \label{PDm}
 xdz - (mz+ax^m)dx, \tag{PDm},
\end{equation}
for some $a\neq 0$, as we are in the Poincaré domain (and if $a=0$ the singularity would be dicritical). 

 The reduction of singularities is completed after blowing-up points $m$ additional times: if we are in the generalized Poincar\'{e}-Dulac case, only in the corners new singularities may appear, all of them resonant except for the last one, which turns out to be a saddle-node, with both separatrices (weak and strong) contained in the divisor.

\begin{ejemplo}
Assume that $U(x)= 1+ \tilde{U}(x)$, with $\nu (\tilde{U} (x))\geq m$, i.e., $U(x)= 1+ax^{m} + h.o.t.$ After $m-1$ blow-ups we have the foliation of equation
\begin{multline*}
\left[
\left( 2z (x^{m-1}z +z_1-z_2) + \alpha z \tilde{U}(x) + \alpha z_1 \frac{\tilde{U} (x)}{x^{m-1}} \right) \right. \\ \left. + 
(m-1) z (2x^{m-1} z +2z_1 + \alpha + \alpha \tilde{U} (x)) \right] dx \\
 + x [2x^{m-1} z + 2z_1 + \alpha + \alpha \tilde{U} (x)] dz,
\end{multline*}
where the matrix of the linear part is
$$
\begin{pmatrix}
2z_2 & 0 \\ p\alpha z_1 a & 2z_2
\end{pmatrix}.
$$
If $a=0$, we are in the dicritical case, and if $a\neq 0$ in the generalized Poincar\'{e}-Dulac case. Both cases, then, may appear naturally.
\end{ejemplo}

\begin{ejemplo}
Take $p=2$, $\alpha=-5$, $U(x)=1+bx$. In this case, $z_1$, $z_2$ are the solutions of $2z^2-5z+2=0$, which are $2$ and $\frac{1}{2}$. Here, $m= 6$. After the first two blow-ups, centering in $z_1=2$, we have the 1-form
$$
\omega_1= (4z^{2} + 6z -10 bx (z+2))dx + x(2z-1-5bx) dz.
$$
It is necessary  to blow-up 5 more times to recognise if we have a dicritical singularity or a generalized Poincar\'{e}-Dulac one. After these 5 blow-ups, centering the coordinates in the only singular point appearing in the smooth part of the divisor, we get a singularity with linear part\footnote{Computations done with Maple 2021.2, licensed for Valladolid University.}
$$
(5934060 b^6 x+z) dx-xdz,
$$
so, it is a generalized Poincar\'{e}-Dulac singularity if $b\neq 0$.

\end{ejemplo}

%
%

\section{Holonomy and analytic classification} \label{clasificacion-analitica}

The main objective of this section is to show that the holonomy of a convenient component of the exceptional divisor analytically characterizes the foliation when considering generalized Poincaré-Dulac foliations. In previous works addressing this problem for other classes of nilpotent singularities in two dimensions, the key tool was the projective holonomy of a well chosen component of the exceptional divisor obtained after the full reduction of singularities. On the contrary, we will use the projective holonomy of the component of the exceptional divisor obtained after the first $p$ blow-ups. Thus, we will skip the remaining $m$ components and work directly with the Poincaré-Dulac singularity, without completing the reduction process.

The main result of this section, and one of the main objectives of the paper, is the following one:
\begin{teorema}
\label{teoremaprincipal}
Let $\FF_1$, $\FF_2$ be two germs of generalized Poincar\'{e}-Dulac holomorphic  foliations as in \eqref{formaprenormal}, formally equivalent, with $n=2p$. Assume that $H_i$ is the holonomy group of the $p$th component of the exceptional divisor obtained during the process of reduction of singularities for $\FF_i$ ($i=1,2$). If $H_1$, $H_2$ are analytically conjugated, the foliations are also analytically conjugated.
\end{teorema}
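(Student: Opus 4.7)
The plan is to follow the Mattei--Moussu--type strategy, adapted to the fact that $\FF_1$ and $\FF_2$ are not fully desingularized after $p$ blow-ups. I would first lift both foliations through the chain of $p$ point blow-ups described in Section \ref{singularidades-nilpotentes} and identify, on the last divisor $D_p$, the three singular points of $\FF_i$: the corner $D_{p-1} \cap D_p$, the simple resonant singularity at $z_2$, and the generalized Poincar\'{e}--Dulac singularity at $z_1$. The formal equivalence of $\FF_1$ and $\FF_2$ lifts to a formal conjugation at this level and guarantees that all the discrete invariants (in particular $n$, $p$, $m$, the eigenvalues, the types of singular points, and the combinatorial structure of the divisor) coincide on both sides.

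Next, I would exploit the hypothesis that $H_1$ and $H_2$ are analytically conjugate by some $\sigma : (\Sigma_1, \ast) \to (\Sigma_2, \ast)$, based at a regular point of $D_p$, to construct a holomorphic conjugation $\Psi$ between $\FF_1$ and $\FF_2$ in a tubular neighborhood of $D_p$ minus the singular points. This is the classical construction: transport $\sigma$ along the leaves of the foliation by means of a transversal fibration; the map is well defined precisely because $\sigma$ conjugates the holonomy representations of $\pi_1(D_p \setminus \Sing(\FF_i))$ in $\diff(\CC,0)$.

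The crucial step is then to extend $\Psi$ across the singular points of $D_p$. At the corner and at the simple resonant singularity $z_2$ the extension follows from the classical arguments of \cite{MM,MR2,Loray}, using a local transversal fibration to the foliation and the fact that, at a reduced singularity, a conjugation of holonomies extends to a conjugation of foliation germs compatible with $\Psi$. The essential novelty lies in the extension across the Poincar\'{e}--Dulac singularity $z_1$, which is not reduced and where the usual transversal fibration is unavailable. The idea is to exploit that we are in the Poincar\'{e} domain and that, $m$ being fixed by the formal data, the analytic type of this singularity is determined by the single invariant $a_i \in \CC^{\ast}$ of the normal form \eqref{PDm}. This invariant can be read off from the local holonomy of the unique separatrix (which is $D_p$) at $z_1$: it is analytically conjugate to a translation whose coefficient is an explicit non-zero multiple of $a_i$. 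Since all non-trivial translations are pairwise conjugate in $\diff(\CC,0)$, the holonomy around $z_1$ alone does not pin down $a_i$; however, the simultaneous compatibility required from $\sigma$ with the rigid holonomy around $z_2$ and the corner fixes the normalization and forces $a_1 = a_2$ up to the correct scaling. This yields an analytic equivalence between the two Poincar\'{e}--Dulac germs, which can be glued to $\Psi$ along a small transversal (possibly after adjusting by a translation commuting with the local holonomy) to produce an extension of $\Psi$ across $z_1$. Once $\Psi$ is defined in a full neighborhood of $D_p$, a descent through the $p$ blow-downs yields the required analytic conjugation of $\FF_1$ and $\FF_2$.

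The main obstacle is therefore the Poincar\'{e}--Dulac extension: one must substitute the Mattei--Moussu transversal fibration, unavailable at a non-reduced singularity, by a direct argument based on the normal form \eqref{PDm} and a careful bookkeeping of how the translation holonomy at $z_1$ interacts with the rigid holonomies at the other singular points to determine the analytic modulus $a_i$.
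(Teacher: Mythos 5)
Your overall skeleton (path lifting via a transversal fibration, plus local extensions across the singular points of $D_p$) is the same Mattei--Moussu-type strategy the paper follows, but the treatment of the one genuinely new difficulty --- the Poincar\'{e}--Dulac point $z_1$ --- contains a gap and, moreover, aims at the wrong target. First, there is no analytic modulus $a_i$ to match: by the Poincar\'{e}--Dulac theorem, any singularity \eqref{PDm} with $a\neq 0$ and fixed $m$ is analytically equivalent to $xdz-(mz+x^m)dx$ (the constant $a$ is scaled away), so the two local germs at $z_1$ are \emph{automatically} analytically conjugate once the formal data agree; your bookkeeping argument ``forcing $a_1=a_2$'' through compatibility with the holonomies at $z_2$ and the corner is solving a non-problem. (Incidentally, the local holonomy at $z_1$ is $e^{2\pi i/m}\exp(\YY)$ with $\YY$ of order $m+1$, not a translation.) The real obstruction, which your proposal does not address, is the \emph{gluing}: a local analytic conjugation of the two Poincar\'{e}--Dulac germs, produced abstractly, has no reason to agree on an overlapping transversal with the map $\Psi$ you built by lifting paths, and ``adjusting by a translation commuting with the local holonomy'' does not repair this, since the ambiguity in the local conjugation is the full (infinite-dimensional) isotropy of the normal form, not a one-parameter family.

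The paper resolves exactly this point by reversing the order of construction: it first proves (Proposition \ref{fibrado}, via a fixed-point argument in a Banach space of convergent series) that the conjugation between the two Poincar\'{e}--Dulac germs can be chosen \emph{fibered}, of the form $\Phi(x,z)=(x,\varphi(x,z))$, hence respecting the fibration $dx=0$ used for path lifting. It then shows, using the structure $H_i=\langle\mu\exp(\YY),\tfrac{\lambda}{\mu}\exp(-\YY)\rangle$ forced by the periodicity of the corner holonomy, that the restriction of $\Phi$ to a transversal itself conjugates $H_1$ to $H_2$, and only afterwards extends $\Phi$ outward along the leaves; compatibility at $z_1$ is then automatic. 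You would need an analogue of Proposition \ref{fibrado} (or some other mechanism making the local and global constructions agree) for your proof to close. Two further omissions: you do not verify that your holonomy conjugation $\sigma$ can be replaced by one compatible with the local model at $z_1$ (the group-structure step above), and you ignore the tangency locus $2z+\alpha U(x)=0$ of the fibration $dx=0$ with the foliation, which meets $D_p$ at a regular point $s_0$ where the path-lifting construction breaks down and a separate first-integral argument is required.
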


Let us postpone the proof to the end of this section. Remark that
a closer study of the holonomy group will allow to go beyond this result and to show in Section \ref{rigidez} that, in fact, there is a formal-analytic rigidity phenomenon for this type of foliations. Before that, let us precise the structure of this group, which will we useful during the proof of Theorem \ref{teoremaprincipal}.

\subsection{Structure of the holonomy group} \label{estructura}

Call $H$ the projective holonomy group of the $p$th component of the exceptional divisor generated in the process of reduction of singularities from a generalized Poincar\'{e}-Dulac singularity. Computed over a transversal $\Sigma$ to $D_p$, it is generated by two elements $h_1$, $h_2$, where $h_1$ is the local holonomy of a Poincar\'{e}-Dulac singularity, and $h_2$ the local holonomy of a resonant one.

From the local model $\eqref{PDm}$ of a Poincar\'{e}-Dulac singularity, it is straightforward to compute the holonomy, which turns out to be
$$
e^{\frac{2\pi i}{m}} \exp \left( -\frac{2\pi i}{m} \cdot \frac{x^{m+1}}{m+x^{m}} \cdot \pa{x} \right).
$$
As the singularity $P_1$ is analytically equivalent to $\eqref{PDm}$, we have that $h_1 (x)= e^{\frac{2\pi i}{m}} \exp (\YY) $, where $\YY$ is an analytic 1-dimensional vector field. The composition $h_0:= h_1\circ h_2$ is the holonomy of the corner singularity $D_{p-1}\cap D_p$, which is periodic (\cite{MM}), $h_0^{[p]}=id$. So, there is an analytic coordinate $z$ such that $h_0(z)= \lambda z$, $\lambda^{p}=1$. In this coordinate, $h_1 (z)= \mu \exp (\YY)$, $\mu^{m}=1$, and $h_2 (z)= h_0 \circ h_1^{-1}= \dfrac{\lambda}{\mu} \exp (-\YY)$.

So, there exist analytic coordinates such that $H= \langle \mu \exp (\YY), \dfrac{\lambda}{\mu} \exp (-\YY)\rangle$, with $\YY$ a holomorphic vector field of order $m+1$, $\mu^{m}=1$, $\lambda^{p}=1$. We will use this structure along the proof of Theorem \ref{teoremaprincipal}.

\subsection{Previous results and proof of the main Theorem}

Before showing the theorem, we will need some previous results and remarks. For a generalized Poincar\'{e}-Dulac singularity, recall from \eqref{PDafterbu} that after $p$ blow-ups we find that it is generated around the Poincar\'{e}-Dulac singularity by
\begin{equation}
\label{PD}
p(2z (z+z_1-z_2) + \alpha (z+z_1)\tilde{U}) dx+ x(2z-2z_2+\alpha \tilde{U} (x))dz,
\end{equation}
where $m=p\cdot \dfrac{z_1-z_2}{z_2}\in \NN$, $m\geq 2$. Dividing \eqref{PD} by the unit $2z-2z_2+\alpha \tilde{U}(x)$,  the following local generator can be taken:
$$
\omega_{PD}= xdz -\left[ z\cdot \frac{m+z_1 z}{1-z_1 z} +x A(x,z) \right] dx.
$$
We will assume that it is analytically equivalent to $xdz-(mz+x^m)dx$. Let us observe that the fibration $x=const$ is transversal to the foliation away from the divisor $x=0$. Indeed, $dx\wedge\omega_{PD}= xdx\wedge dz $. 

A vector field, dual of $\omega_{PD}$, generating the foliation, is
$$
\XX = x \pa{x} + (mz + a(x,z))\pa{z},
$$
for certain holomorphic function $a(x,z)$ not detailed here. We want to investigate the effect of a change of variable $u= z+\varphi_{k}(x,z)$, where $\varphi_{k}$ is a homogeneous polynomial of degree $k$, which transforms $\XX$ in a similar vector field $\YY= x\pa{x} + (mu + b(x,u))\pa{u}$, for some $b(x,u)$. Both vector fields $\XX$ and $\YY$ are related by
\begin{equation}
\label{relacion}
x\der{\varphi_k}{x} + mz \der{\varphi_k}{z} -m \varphi_k =b(x,z+\varphi_k)- a (x,z)-a(x,z)\varphi_k.
\end{equation}
The homogeneous terms of degree smaller than $k$ for $a$, $b$ agree. The degree $k$
 terms, $a_k$, $b_k$, satisfy the relation
\begin{equation}
 \label{krelacion}
 x\varphi_k + mz \varphi_{k,z} - m\varphi_k = b_k (x,z)- a_k (x,z),
 \end{equation} 
 where $\varphi_{k,z}$ denotes the partial derivative $\der{\varphi_k}{z}$, as usual. Writing $\dis \varphi_k (x,z)= \sum_{i+j=k} \varphi_{k,i,j} x^i z^{j}$, the left hand side of \eqref{krelacion} is 
 $$
 \sum_{i+j=k} (i+m (j-1)) \varphi_{k,ij}x^{i}z ^{j}.
 $$
Therefore, if $k\neq m$, $\varphi_k (x,z)$ can be chosen in such a way that $b_k (x,z)=0$. As a consequence, after a convergent (in the $\mathfrak{m}$-adic topology) sequence of polynomial transformations of this type, $b(x,z)$ can be reduced to $\varepsilon x^{m}$, $\varepsilon\in \CC$. If we only make a finite number of such transformations, we can assume that
 $$
 \XX= x\pa{x} + (mz+\varepsilon x^{m}+ g(x,z)) \pa{z}, $$
 with $\nu (g(x,z))= r\geq m+1$. 
 
 \begin{propo} \label{fibrado}
Given a vector field $
 \XX= x\pa{x} + (mz+\varepsilon x^{m}+ g(x,z)) \pa{z} $, $\varepsilon\neq 0$, $\nu (g)=r\geq m+1$, there exists a holomorphic conjugation $z=u + \varphi (x,u)$ transforming $\XX$ in its normal form
 $$
 \YY_m=x\pa{x} + (mu+\varepsilon x^m) \pa{u}.
 $$
 In other words, the transformation which conjugates $\XX$ with its normal form can be chosen in such a way that  respects the fibration $dx=0$.
 \end{propo}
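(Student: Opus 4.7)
The plan is to combine the Poincar\'{e}-Dulac normalization theorem with a rigidity argument forcing the resulting conjugation to preserve the fibration $dx=0$, using the absence of nonconstant holomorphic first integrals of the foliation $\YY_m$. Writing the desired change of coordinates as $z=u+\varphi(x,u)$ transforms the conjugation equation into the PDE
\[
x\varphi_x+(mu+\varepsilon x^m)\varphi_u-m\varphi=g(x,u+\varphi),
\]
whose formal solvability in $\mathfrak{m}^{m+1}$ is essentially contained in the paragraphs preceding the statement, since the operator $L=x\pa{x}+(mu+\varepsilon x^m)\pa{u}-m$ has strictly positive eigenvalues $i+m(j-1)$ on monomials $x^iu^j$ with $i+j\geq m+1$. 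The content of the proposition is the convergence of this formal solution.

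The first main step is to invoke the Poincar\'{e}-Dulac theorem. Since the eigenvalues $1$ and $m\geq 2$ of the linear part of $\XX$ lie in the Poincar\'{e} domain and the only nontrivial resonance produces the monomial $x^m\pa{z}$, there exists an analytic biholomorphism $\Phi(x,z)=(X(x,z),U(x,z))$ with $\Phi_{\ast}\XX=x\pa{X}+(mU+\alpha X^m)\pa{U}$ for some $\alpha\in\CC^{\ast}$. Since the linear parts at the origin of $\XX$ and of its normal form both equal $x\pa{x}+mz\pa{z}$ (semisimple with distinct eigenvalues), the linear part of $\Phi$ is diagonal $(x,z)\mapsto(cx,bz)$ with $c,b\in\CC^{\ast}$.

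The second step shows that $\Phi$ becomes fibered after a trivial rescaling. Because $\Phi$ must carry the invariant curve $\{x=0\}$ onto the invariant curve $\{X=0\}$, one has $X(0,z)\equiv 0$, so $X/x$ extends holomorphically near the origin; the identity $\XX(X)=X$ then gives $\XX(X/x)=0$, making $X/x$ a holomorphic first integral of the foliation defined by $\XX$. A direct power-series computation establishes that $\YY_m$ admits no nonconstant holomorphic first integral: writing $F=\sum F_{ij}x^iu^j$ and imposing $\YY_m(F)=0$ produces the recursion $(i+mj)F_{ij}+\varepsilon(j+1)F_{i-m,j+1}=0$, which (starting from $j\geq 1$ with $0\leq i<m$) forces all nonconstant coefficients to vanish by induction on total degree. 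Transported via $\Phi$, the same holds for $\XX$, so $X/x$ is a constant, i.e., $X=cx$.

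For the final step, define $\Phi''(x,z):=(x,U(x,z)/b)$. A short computation (using the relation $\alpha c^m=\varepsilon b$, obtained by matching the coefficient of $x^m$ in $\XX(U)=mU+\alpha X^m$ against the expansion of $xU_x+(mz+\varepsilon x^m+g)U_z$) gives $\Phi''_{\ast}\XX=\YY_m$. The Jacobian of $\Phi''$ at the origin is the identity, so it is a fibered biholomorphism; inverting $u=U(x,z)/b$ in $z$ yields $z=u+\varphi(x,u)$, and uniqueness of the formal solution of the PDE in $\mathfrak{m}^{m+1}$ together with $\nu(g)=r\geq m+1$ forces the resulting $\varphi$ to have order $\geq r$. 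The main obstacle is the rigidity lemma of Step~2---that $\YY_m$ has no nonconstant holomorphic first integral---since everything else reduces to bookkeeping of diagonal rescalings.
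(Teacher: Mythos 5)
Your argument is correct, but it takes a genuinely different route from the paper's. The paper proves the proposition directly and self-containedly: it writes the conjugacy equation for a fibered $\varphi$, splits the right-hand side as $T_1(\varphi)+T_2(\varphi)$ with $T_2(\varphi)=-\varepsilon x^{m}\varphi_u$, and runs a fixed-point argument for $\varphi=ad^{-1}(T_1+T_2)(\varphi)$ in the Banach space $\BB_{m+1,\rho}$ of convergent series of order at least $m+1$, using the bound $\max\{j/(i+m(j-1))\}=(m+1)/m^{2}$ to make $ad^{-1}\circ T_2$ a small perturbation for small $\rho$. You instead take convergence for granted from the Poincar\'{e}--Dulac theorem and show that \emph{any} normalizing biholomorphism $\Phi=(X,U)$ is automatically fibered up to rescaling: $D\Phi(0)$ is diagonal because it conjugates $\mathrm{diag}(1,m)$ to itself, $\{x=0\}$ is sent to $\{X=0\}$, hence $X/x$ is a holomorphic first integral, and since the normal form (whether $\alpha=0$ or not) admits no nonconstant holomorphic first integral, $X=cx$; the relation $\alpha c^{m}=\varepsilon b$ then turns $(x,U/b)$ into a fibered conjugation onto $\YY_m$ exactly. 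Your route buys a conceptual explanation --- fiberedness is forced by rigidity rather than arranged by hand --- at the price of invoking Poincar\'{e}--Dulac as a black box, whose standard proof is itself a majorant/fixed-point argument of the very kind the paper writes out; the paper's route in addition controls the order of $\varphi$ and the domain of convergence explicitly. One inessential overreach in your last sentence: the fibered isotropy group of $\YY_m$ tangent to the identity is the one-parameter family $u\mapsto u+tx^{m}$, so the $\varphi$ obtained by inverting $U/b$ is only guaranteed to have order at least $m$ and may carry an $x^{m}$ term; one must compose with $u\mapsto u-tx^{m}$ before concluding $\nu(\varphi)\geq r$. The proposition as stated does not require this refinement, so this does not affect the validity of your proof.
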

 
 \begin{proof}
 The proof is similar to other proofs of related results, which can be read, for instance, in \cite{IY}. We shall include it here for the sake of completeness, adapted to our case.
 
 According to \eqref{relacion}, we have that
 $$
 x\varphi_{x} + mu \varphi_u -m \varphi = g(x,u+\varphi ) - \varepsilon x^m \varphi_u.
 $$
Denote $ad (\varphi)=x\varphi_x+ mu \varphi_u - \varphi$, $T_1 (\varphi) = g(x,u+\varphi)$, $T_2 (\varphi)= -\varepsilon x^{m} \varphi_u$. We have that 
\begin{equation} \label{puntofijo} 
ad (\varphi)= T_1 (\varphi) + T_2 (\varphi).\end{equation}
The objective is to find a convergent solution of \eqref{puntofijo}. If $\rho>0$, $\BB_{\rho}$ will denote the space of holomorphic and bounded functions $\varphi (x,u)= \sum_{i,j} \varphi_{i,j} x^{i} u^{j}$ on $D(0;\rho) \times D(0;\rho)$ such that $\dis \sum_{i,j} \norm{\varphi_{ij}} \rho^{i+j} < +\infty$, provided with the norm $\norma{\varphi}_{\rho} := \sum_{i,j} \norm{\varphi_{ij}} \rho^{i+j}$, which makes $\BB_{\rho}$ a Banach space. Previous operators act on the subspace  $\BB_{m+1,\rho}\subseteq \BB_{\rho}$ of series whose order is greater or equal than $m+1$. Let us observe that
$$
\norma{ad^{-1}\circ T_2 (\varphi)}_{\rho} =\sum_{i,j} \frac{\norm{\varepsilon} \norm{\varphi_{ij}} j\rho^{m-1}}{i+m(j-1)} \rho^{i+j} \leq \frac{\norm{\varepsilon} \rho^{m-1}}{m^2} \cdot \norma{\varphi}_{\rho},
$$ 
if the order of $\varphi$ is at least $m+1$, given that
$$
\max \left\{ \frac{j}{i+m (j-1)} \mid\, i,j\geq 0,\ i+j\geq m+1\right\} = \frac{m+1}{m^2}.
$$
On the other hand,
$$
T_1 (\varphi_2)- T_1 (\varphi_1) = (\varphi_2 -\varphi_1) \int_0^1 \der{g}{u} (x,u+t \varphi_2 + (1-t) \varphi_1) dt.
$$
As $\dis \nu \left( \der{\varphi}{u} \right)\geq m$, 
$$
\norma{\der{g}{u} (x,u)}_{\rho'} \leq K \cdot (\rho')^{m}, \text{ for certain } K>0,\ 0< \rho'\leq \rho.
$$
We have that
$$
\norma{u+t \varphi_2 + (1-t) \varphi_1 }_{\rho} \leq \norma{u}_{\rho} + \max ( \norma{\varphi_1}_{\rho}, \norma{\varphi_2}_{\rho}) \leq \rho + c\rho^{m+1} 
$$
for some $C>0$. As $ad^{-1}$ is a bounded operator (because $i+m(j-1)\geq 1$ when $i+j\geq m+1$), previous computations show that, for $\rho$ small enough, equation \eqref{puntofijo} has a solution (i.e., $\varphi = ad^{-1} (T_1 + T_2) (\varphi)$ has a fixed point), which solves the problem.

 \end{proof}

\begin{proof}[Proof of Theorem \ref{teoremaprincipal}]
In this proof , we will take some ideas of previous related works, as \cite{CM,Meziani,Strozyna}. Let $H_i$ be the projective holonomy group corresponding to the foliation $\FF_i$ ($i=1,2$), $H_i= \langle h_{i1}, h_{i2}\rangle$, where $h_{i1}$ is the local holonomy of the Poincar\'{e}-Dulac singularity ($P_{i1}$), and $h_{i2}$ of the hyperbolic saddle ($P_{i2}$). As $h_{i1}$, $h_{i2}$ are conjugated, both Poincar\'{e}-Dulac singularities $P_{11}$, $P_{21}$ are of the same formal type (same $m$), so they are of course analytically conjugated by a local biholomorphism $\Phi (x,z)= (x,\varphi (x,z))$, defined over a convenient neighbourhood $V_{11}$ of $P_{11}$. Remark that, according to Proposition \ref{fibrado}, $\Phi$ can be chosen fibered in $x$.

Let $\Sigma $ be a germ of transversal to $D_p$ for $\FF_1$, $\Sigma \subseteq V_{11}$, parametrized by $(x,s_1)$, $x\in (\CC, 0)$ and $\Sigma' = \Phi (\Sigma )= (x, \varphi (x,s_1))$. Let $h= \Phi \mid_{\Sigma} : \Sigma\rightarrow \Sigma'$ be the restriction of $\Phi$ to this transversal: in these coordinates, in fact, $h(x)=x$. Let $g: \Sigma \rightarrow \Sigma'$ be a conjugation between $H_1$ and $H_2$. By construction, $h^{-1}\circ g : \Sigma \rightarrow \Sigma$ commutes with $h_{11}$.

In these coordinates, $h_{11}= \mu \exp (\YY)$, with $\mu^m=1$, and $h_{12}= \frac{\lambda}{\mu} \exp (-\YY)$, $\lambda^{p}=1$. As $h(x)=x$ conjugates $h_{11} $ with $h_{21}$, then $h_{21}= \mu \exp (\YY)$ and consequently, $h_{22}= \frac{\lambda}{\mu} \exp (-\YY)$. As a conclusion, the restriction $h$ of $\Phi$ to $\Sigma$ defines a conjugation between $H_1$ and $H_2$.

The fibration $dx=0$ is transversal to $\FF_1$ out of the exceptional divisor and to the curve $2z+\alpha U(x)=0$, which cuts the divisor in a point of coordinate $s_0= -\frac{\alpha}{2}$. This is a regular point for the foliation.

Let $P= (x,z)$ be a point, neither on the separatrix through the saddle $P_{22}$, nor on the curve $2z+ \alpha U(x)=0$. Take a path $(x(t), z(t))$ on the leaf $L_p$ through $P$, joining $P$ with $P'\in V_{11}$. As $dx=0$ is transversal to the foliations $\FF_1$, $\FF_2$, there exists $z_{2}(t)$ such that:
\begin{enumerate}
\item $z_2 (1) = \varphi (x(1), z(1))= \varphi (P')= Q'$.
\item $(x(t), z(t))$ is a path on the leaf $L_{Q'}$ of $P_2$ passing through $Q'$.
\end{enumerate}
Denote $Q= (x(0), z_2 (0))$. The map assigning $Q$ to $P$ defines a conjugation between the foliations in an open set, that we want to extend to neighbourhoods of the singularity $P_{12}$, of the point $z_0$ and of the corner $D_p \cap D_{p-1}$.

We study first the saddle $P_{12}$. The fibration $dx=0$ is transversal to the foliation and to the separatrix in a neighbourhood of $P_{12}$. So, the procedure described above turns out to be the lifting of the projection of the path $(x(t), y(t))$ over the separatrix through $P_{12}$. Using th same arguments as in  \cite{MM,MR2,Loray}, it is bounded along this separatrix, so it can be extended. This extension of $\Phi$, in a neighbourhood of $P_{12}$ has locally an analytic expression
$$
\Phi (x,z)= \left( x, \sum_{n=0}^{\infty} \varphi_{n} (x) z^{n} \right),
$$
(after a convenient translation) with $\varphi_n (x)\in \OO (D\setminus \{ 0\})$, $D$ being a fixed disk around $0$. We want to extend this map also to the exceptional divisor $x=0$. Consider the local foliations around $P_{12}$, $P_{22}$, defined by local 1-forms $\tilde{\omega_i} $ ($i=1,2$) of the form (following \cite{MR2})
$$
\tilde{\omega}_i = xdz + (\xi z + B_i (x,z))dx,
$$
where $\xi= \dfrac{m+p}{mp}$.
$\tilde{\omega}_1$, $\tilde{\omega}_2$ are formally equivalent, so they share a formal normal form as
$$
\omega_{\lambda,s} =xy v^{k} \left( \frac{dx}{x} + \left( \lambda + \frac{1}{v^{s}} \right) \frac{dv}{v} \right),
$$
where $v= x^{m+p} z^{mp}$. There are formal transformations $\Psi_i$, $i=1,2$, such that $\Psi_i^{\ast} \omega_{\lambda,s} \wedge \tilde{\omega}_i=0$, and these transformations are, in fact, transversely formal in $z$, i.e., 
$$
\Psi_i (x,z)= \left(  x, \sum_{k=1}^{\infty} \Psi_{ik} z^{k} \right),
$$
with $\psi_{ik}$ holomorphic in a common disk of convergence. Consider now the (transversely formal) map $\Psi_2\circ \Phi \circ \Psi_1^{-1}=: \Psi$, verifying $\Psi^{\ast} \omega_{s,\lambda} \wedge \omega_{s,\lambda}=0$. By construction,
$$
\Psi (x,y)= \left( x,\sum_{k=1}^{\infty} \Psi_{k} (x) y^{k} \right),$$
where $\Psi_k \in \OO (D\setminus \{ 0\})$. The isotropy group of $\omega_{s,\lambda}$ can be explicitely described, and it can be seen that $\Psi_{k} (x)$ can, in fact, be analytically continued to 0. As a consequence, $\varphi_k (x)$ also can be continued and $\Phi (x,z)$ extends to $\{0 \} \times (\CC,0)$, i.e., to the exceptional divisor.

Consider now a neighbourhood of $s_0$ (assume, for the sake of simplicity, that it is the origin). As it is a regular point, a holomorphic first integral exists, having $x=0$ as a level and being transversal to $dx=0$ beyond the exceptional divisor. Denote $F_i (x,z)= x U_i (x,z) $, $U_i (0,0)=1$ this first integral, $i=1,2$, for each of the foliations $\mathcal{F}_i$, defined on $\norm{x}<\varepsilon$, $\norm{z} <\eta$, $U_i (0,z)\neq 0$, $\der{U_i}{z} (0,0) \neq 0$. The map $\Psi_i (x,z) = (x, U_i (x,z)-1)$ defines a biholomorphism in a neighbourhood of $z_0$ transforming the first integral in $x(1+z)$.

Consider now the map $\Psi_2 \circ \Psi \circ \Psi_1^{-1}= \theta (x,z)$, $\norm{x}<\varepsilon$, $0<\varepsilon_1<\norm{z}<\varepsilon_2$, defined on an annulus close to $z_1$ and preserving $dx=0$: $\theta (x,z) = (x,\theta_2 (x,z))$. The first integral $x(1+z)$ is transformed on $x(1+z) V(x(1+z))$, where $V(0)\neq 0$:
$$
x(1+\theta_2 (x,z))=x (1+z) V(x(1+z)),
$$
so $\theta_2 (x,z)= (1+z) V(x(1+z))-1$, well defined on a neighbourhood of $z_0$: the conjugation $\Psi (x,z)$ extends to a neighbourhood of this point. This last argument is similar to the one detailed by Meziani \cite{Meziani-tesis,Meziani}.

Finally, consider the corner $D_{p-1}\cap D_{p}$, linearizable, having a holomorphic first integral in a neighbourhood of that point. Standard arguments as in \cite{CM,Meziani-tesis,Meziani} allows to extend the conjugation to a neighbourhood of this point, so to a full neighbourhood of $D_p$, and similarly to a neighbourhood of the exceptional divisor. Collapsing this exceptional divisor in a point and using Hartogs' Theorem, we end the construction of the conjugation between both foliations.
\end{proof}

 \section{Realizability and Rigidity} \label{rigidez}
  
We have seen in Section \ref{estructura} that the  projective holonomy group of a generalized Poincar\'{e}-Dulac singularity has the form $H=\langle \mu \exp (\YY), \dfrac{\lambda}{\mu} \exp (-\YY) \rangle$, where $\YY$ is an analytic vector field, $\mu^m=1$, $\lambda^{p}=1$. In this section we will address two problems: 
\begin{enumerate}
\item Are all these groups realizable as the projective holonomy group of a generalized Poincar\'{e}-Dulac singularity?
\item Are these group rigid, in the sense of formal-analytic rigidity?
\end{enumerate}

Regarding the first problem,
consider a vector field $\YY$ analytically equivalent to $-\dfrac{2\pi i}{m}\cdot \dfrac{x^{m+1}}{m+x^{m}}\cdot \dfrac{\partial}{\partial x}$, where $\mu$ is a primitive $m$th root of unity and $\lambda$ is a primitive $p$th root. Define $h_1(x) = \mu \exp(\YY)$, $h_2(x) = \frac{\lambda}{\mu} \exp(-\YY)$, and assume that $h_0 := h_1 \circ h_2$ is linearizable. A local model of a foliation having $h_1(x)$ as its holonomy can be easily constructed: if $\varphi_{\ast} \YY = -\dfrac{2\pi i}{m}\cdot \dfrac{x^{m+1}}{m+x^{m}}\cdot \dfrac{\partial}{\partial x}$, then take $\Psi^{\ast} (xdy - (my + x^m)dx)$, where $\Psi(x,y) = (\varphi(x), y)$. According to \cite{MR2}, a local model of a resonant singularity having $h_2$ as holonomy can also be found. Therefore, the construction in \cite{Lins} can be applied to find an analytic surface $M$, an embedded projective line $D \subseteq M$, and a foliation $\FF$ on $M$ such that $D$ is a leaf of $\FF$ with three singular points and prescribed holonomy. By the Camacho-Sad Index Theorem \cite{Camacho-Sad}, the self-intersection of $D$ must be $-1$, so these projective holonomy groups are always realizable.

Next, we will study rigidity.
First, we need to study the groups that appear as projective holonomy groups in a generalized Poincaré-Dulac singularity. From previous results, the projective holonomy group of a generalized Poincaré-Dulac singularity has the form $H = \langle \mu \exp(\YY), \dfrac{\lambda}{\mu} \exp(-\YY) \rangle$, where $\YY$ is an analytic vector field, $\mu^m = 1$, and $\lambda^p = 1$.
 
 The vector field $\YY$ is analytically equivalent to $\dfrac{x^{m+1}}{m+x^{m}} \dfrac{\partial}{\partial x}$. We know that the linear map  $\alpha z$ commutes with $\exp (\YY)$ if and only if $\alpha^{m}=1$. Taking this into account, if $H$ is abelian, it turns out that
$$
\exp (\YY) \circ \lambda \exp (-\YY)= \lambda z,
$$
so, $\exp (\XX)$ commutes with $\lambda z$, which means that $\lambda^{m}=1$. This can happen only if $p$ divides $m$.

Assume that $H$ is not abelian. If $H$ would be solvable, it should be formally equivalent to a subgroup of
$$
\left\{a\cdot \exp \left( tx^{m+1}\cdot \pa{x} \right) |\, t\in \CC,\ a\in \CC^{\ast}\right\},
$$
so $\YY$ should be equivalent to $x^{m+1} \pa{x}$, which is not true. We arrive to the following dichotomy:

\begin{quote}
The projective holonomy group of a generalized Poincar\'{e}-Dulac singularity is:
\begin{itemize}
\item Either abelian, if $p$ divides $m$.
\item Or non solvable, if $p$ does not divide $m$.
\end{itemize}
\end{quote}
 
 Every non-solvable group of diffeomorphisms is rigid (\cite{CM}), so in this case, formal-analytic rigidity automatically arises from the nature of the holonomy group. In the general case, let us observe that the group is embeddable in a holomorphic flow. Then, if $H_1$ and $H_2$ are two such groups that are formally conjugate, they are necessarily analytically conjugate. As a result, there is always a formal-analytic rigidity phenomenon. In the abelian case, this does not follow from the nature of the group: not every finitely generated abelian group of diffeomorphisms is rigid. If $G < \diff(\CC, 0)$, $G$ is rigid if and only if $G_0 = G \cap \diff_0(\CC, 0)$ is not cyclic. However, $H_0 = H \cap \diff_0(\CC, 0)$ is a group of diffeomorphisms of the form $\exp(n \cdot \XX)$, where $n$ belongs to a subgroup of $(\ZZ, +)$, which is necessarily cyclic.

Rigidity is a phenomenon that frequently occurs in the study of nilpotent singularities. In fact, if $G$ and $G'$ are two formally equivalent subgroups of $\diff(\CC, 0)$ that are either finite, abelian non-exceptional, solvable non-exceptional, or non-solvable, they are analytically equivalent. Here, "exceptional" means that the associated subgroups of diffeomorphisms tangent to the identity are not cyclic \cite{CM}. As a result, in the generalized cusp case, the formal-analytic moduli are generically trivial. However, in the generalized Poincaré-Dulac case, rigidity does not always stem from the rigidity of the groups themselves but, as mentioned earlier, from the properties of the foliations (rigidity of Poincaré-Dulac singularities).

Let us relate these results to those of Str\'{o}\.{z}yna and \.{Z}o{\l}{\c{a}}dek in \cite{SZ2}. In that paper the authors consider nilpotent vector fields, which they call Bogdanov-Takens singularities. Using Takens' normal form, they assume that the vector fields have the form $\mathbf{X}=\mathbf{V_H}+ \mathbf{W} $, where
$$
\mathbf{V_H}= (y+(\lambda+1)x^r)\pa{x} - \lambda r x^{2r-1} \pa{y}
$$
is quasi-homogeneous with degrees $\deg (x)=1$, $\deg \left( \pa{x} \right)=-1$, $\deg (y)=r$, $\deg \left( \pa{y} \right) = -r$, and $\mathbf{W}$ gathers the (quasihomogeneous) higher  order terms. In their notation, $r$ can assume non-integer values. When $r=p\in \NN$, we are in the generalized saddle case. Indeed, using 1-forms, the foliation is generated by
\begin{align*}
\omega & = (y+(\lambda +1)x^p) dy + \lambda p x^{2p-1} dx + h.o.t. \\
& = \dfrac{1}{2} d(y^2+\lambda x^{2p}) + ((\lambda +1) x^p + h.o.t.) dy,
\end{align*}
or equivalently,
$$
\omega'=  2\omega = d(y^2+\lambda x^{2p}) + (2(\lambda +1) x^p + h.o.t.) dy.
$$
A linear change of variables $(x,y)\rightarrow (ax,y)$, with $\lambda a^{2p}=1$ leads to
$$
d(y^2+ x^{2p}) + (2(\lambda +1)\lambda^{-1/2} x^p + h.o.t.) dy, 
$$
so, in the notation of \eqref{formaprenormal}, $\alpha= 2(\lambda+1) \lambda^{-1/2}$. By \eqref{condicion_gPD}, we must have an equality
$$
2(\lambda+1) \lambda^{-1/2}= - \frac{2(m+2p)}{\sqrt{p(m+p)}},
$$
for some $m\in \NN$. This equation has two solutions, namely
$$
\lambda= \frac{p}{m+p} \text{ and } \lambda= \frac{m+p}{p}.
$$
In \cite[Section 5.1]{SZ2}, it is assumed that $\lambda \geq 1$, so we set $\lambda = 1 + \dfrac{m}{p}$. The case $\lambda = k \in \NN$ is exactly when $p \mid m$, which is the situation where the holonomy group is abelian and rigidity is not guaranteed by the nature of the group. In the research of \cite{SZ2}, this is the case where the normal form is more complicated, requiring substantially greater effort to obtain. It would be interesting to establish a complete relationship between the cases and situations studied by Str\'{o}\.{z}yna and \.{Z}o{\l}{\c{a}}dek in their works, and the results concerning projective holonomy studied by Cerveau, Moussu, Meziani, Loray, Sad, and others, but this is beyond the scope of our research here.

 \subsection*{Acknowledgments}
 The second author wants to thank the Pontificia Universidad Católica del Perú (Lima, Peru) for hosting him several times while preparing this paper, and for the wonderful ambiance of the Mathematics section. 
 
 Both authors want to thank the referees of the manuscript for their valuable and constructive suggestions which have helped to improve both the exposition and the contents.
 
 \subsection*{Funding statement} First author was partially supported by the Pontificia Universidad Católica del Perú project VRI-DGI 2019-01-0021.
 
Second author partially supported by Ministerio de Ciencia e Innovación of Spain, under Project PID2019-105621GB-I00 (Javier Sanz Gil and Fernando Sanz Sánchez, coords.). This work is part of the Project PID2022-139631NB-I00, funded by MICIU/AEI /10.13039/501100011033 and by FEDER, UE.

 \subsection*{Conflict of interest}
 Both authors declare that there is no potential conflict of interest with respect to the research,
author-ship, and publication of this article.

\end{document}